\newtheorem{theorem}{Theorem}[section]
\newtheorem{proposition}[theorem]{Proposition}
\newtheorem{lemma}[theorem]{Lemma}
\newtheorem{corollary}[theorem]{Corollary}
\theoremstyle{definition}
\newtheorem{remark}[theorem]{Remark}
\newtheorem{example}[theorem]{Example}
\newtheorem{question}[theorem]{Question}
\theoremstyle{remark}
\newtheorem*{ac}{Acknowledgments}
\numberwithin{equation}{theorem}
\newcommand{\Spec}{\mathop{\mathrm{Spec}}\nolimits}
\newcommand{\Supp}{\mathop{\mathrm{Supp}}\nolimits}
\newcommand{\Ass}{\mathop{\mathrm{Ass}}\nolimits}
\newcommand{\Assh}{\mathop{\mathrm{Assh}}\nolimits}
\newcommand{\soc}{\mathop{\mathrm{soc}}\nolimits}
\newcommand{\Min}{\mathop{\mathrm{Min}}\nolimits}
\newcommand{\ann}{\mathop{\mathrm{ann}}\nolimits}
\newcommand{\codim}{\mathop{\mathrm{codim}}\nolimits}
\newcommand{\RHom}{\mathop{\mathrm{\boldsymbol{\rm R}Hom}}\nolimits}
\newcommand{\Lotimes}{\mathop{\otimes^{\boldsymbol{\rm L}}_R}\nolimits}
\newcommand{\depth}{\mathop{\mathrm{depth}}\nolimits}
\newcommand{\ldim}{\mathop{\mathrm{ldim}}\nolimits}
\newcommand{\pd}{\mathop{\mathrm{pd}}\nolimits}
\newcommand{\Td}{\mathop{\mathrm{Td}}\nolimits}
\newcommand{\amp}{\mathop{\mathrm{amp}}\nolimits}
\newcommand{\cmd}{\mathop{\mathrm{cmd}}\nolimits}
\newcommand{\lcmd}{\mathop{\mathrm{lcmd}}\nolimits}
\newcommand{\rank}{\mathop{\mathrm{rank}}\nolimits}
\newcommand{\fp}{\mathfrak{p}}
\newcommand{\fq}{\mathfrak{q}}
\newcommand{\fn}{\mathfrak{n}}
\newcommand{\fm}{\mathfrak{m}}
\def\H{\operatorname{\mathsf{H}}}
\def\K{\operatorname{\mathsf{K}}}
\def\xx{\boldsymbol{x}}
\def\D{\operatorname{\mathsf{D}}}
\def\R{\mathbf{R}}
\def\I{\operatorname{I}}
\def\g{\operatorname{G^\ast-dim}}
\newcommand{\jump}[1]{\ensuremath{[\![#1]\!]} }
\begin{document}
\title{Little dimension and the improved new intersection theorem}
\author{Tsutomu Nakamura}
\address[T. Nakamura]{Dipartimento di Informatica - Settore di Matematica, Universit\`a degli Studi di Verona, Strada le Grazie 15 - Ca' Vignal, I-37134 Verona, Italy}
\email{tsutomu.nakamura@univr.it}
\author{Ryo Takahashi}
\address[R. Takahashi]{Graduate School of Mathematics, Nagoya University, Furocho, Chikusaku, Nagoya, Aichi 464-8602, Japan/Department of Mathematics, University of Kansas, Lawrence, KS 66045-7523, USA}
\email{takahashi@math.nagoya-u.ac.jp}
\urladdr{https://www.math.nagoya-u.ac.jp/~takahashi/}
\author{Siamak Yassemi}
\address[S. Yassemi]{School of Mathematics, Statistics and Computer Science, College of Science, University of Tehran, Tehran, Iran}
\email{yassemi@ut.ac.ir}
\subjclass[2010]{13C15, 13D22}
\keywords{little dimension, improved new intersection theorem, big Cohen--Macaulay module}
\thanks{Ryo Takahashi was partly supported by JSPS Grant-in-Aid for Scientific Research 16K05098 and JSPS Fund for the Promotion of Joint International Research 16KK0099}
\begin{abstract} 
Let $R$ be a commutative noetherian local ring. We define a new invariant for $R$-modules which we call the little dimension.
Using it, we extend the improved new intersection theorem.
\end{abstract}
\maketitle

\section{Introduction}

Throughout this paper, we assume that $R$ is  a commutative noetherian local ring with maximal ideal $\fm$ and residue field $k$. 
An $R$-module $B$ is called a {\em balanced big Cohen--Macaulay module} if $B\ne\fm B$ and any system of parameters of $R$ is a regular sequence on $B$; see \cite[\S 8.5]{BH}.
By a recent work of Andr\'e \cite{A} (see also \cite{HM}), any commutative noetherian local ring admits a balanced big Cohen--Macaulay module, and thus the following {\em improved new intersection theorem} holds (see \cite[p.509]{H} and \cite[p.153]{H2}).

\begin{theorem}[Andr\'e \cite{A}, Evans--Griffith \cite{EG}]\label{INIT}
Let $F=(0\to F_n\to F_{n-1}\to \cdots \to F_{0}\to 0)$ be a complex of free $R$-modules of finite rank.
Assume that  
\begin{itemize}
\item[(1)]
$\H_i(F)$ has finite length for all $i>0$, and
\item[(2)]
there is an element $x\in\H_0(F)\setminus \fm\H_0(F)$ such that $Rx$ has finite length.
\end{itemize}
Then $\dim R\leq n$.
\end{theorem}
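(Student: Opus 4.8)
My plan is to use a balanced big Cohen--Macaulay $R$-module $B$, which exists by André's theorem quoted above, and to argue by contradiction, assuming $d:=\dim R\ge n+1$. The first, easy, input will be hypothesis (2). Since $-\otimes_R B$ is right exact, $\H_0(F\otimes_R B)\cong\H_0(F)\otimes_R B$, and I claim that $x\otimes1$ is a nonzero element of this module killed by a power of $\fm$. It is killed by $\fm^t$ for $t\gg0$ because $Rx\cong R/\ann_R(x)$ has finite length, so $\fm^t\subseteq\ann_R(x)$; and it is nonzero because the surjection $\H_0(F)\twoheadrightarrow\H_0(F)/\fm\H_0(F)\cong k^{\oplus s}$ carries $x$ to a nonzero vector, whence, after tensoring with $B$ and composing with a coordinate projection, $x\otimes1$ maps to a nonzero element of $B/\fm B$, which is nonzero since $B\ne\fm B$. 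Thus $\Gamma_\fm(\H_0(F)\otimes_R B)\ne0$, i.e. $\depth_\fm(\H_0(F)\otimes_R B)=0$.

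The real work is the opposite inequality: I want to show that $n<d$ forces $\depth_\fm(\H_0(F)\otimes_R B)\ge1$, which will contradict the above and give $\dim R\le n$. First I would reduce to the case $d=n+1$: if $y_1,\dots,y_{d-n-1}$ is part of a system of parameters of $R$, then it is a $B$-regular sequence, so $B':=B/(y_1,\dots,y_{d-n-1})B$ is an $R$-module with $\depth_\fm B'=n+1$ and $\H^j_\fm(B')=0$ for $j\ne n+1$; moreover the image of $x\otimes1$ in $\H_0(F)\otimes_R B'$ is still nonzero and $\fm^t$-torsion (same projection argument), so it suffices to prove $\depth_\fm(\H_0(F)\otimes_R B')\ge1$. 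Now $F\otimes_R B'=(0\to(B')^{b_n}\to\cdots\to(B')^{b_0}\to0)$ with $b_i=\rank_R F_i$, concentrated in homological degrees $0,\dots,n$, and since $F$ is a bounded complex of flats, $\RGamma_\fm(F\otimes_R B')\simeq F\otimes_R\RGamma_\fm(B')$. Because $B'$ has local cohomology concentrated in degree $n+1$ while $F$ sits in homological degrees $0,\dots,n$ with $n<n+1$, the complex $\RGamma_\fm(F\otimes_R B')$ has cohomology only in cohomological degrees $1,\dots,n+1$; in particular $\H^0_\fm(F\otimes_R B')=0$. Feeding this into the spectral sequence $\H^p_\fm(\H_q(F\otimes_R B'))\Rightarrow\H^{p-q}_\fm(F\otimes_R B')$ reduces the desired vanishing $\Gamma_\fm(\H_0(F)\otimes_R B')=0$ to the statements $\H^{q+1}_\fm(\H_q(F\otimes_R B'))=0$ for $1\le q\le n$. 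This is where hypothesis (1) must enter: it says that $F$ becomes a free resolution of $\H_0(F)$ after localizing at any prime $\fp\ne\fm$, so $\pd_{R_\fp}\H_0(F)_\fp\le n$ there, and one leverages this — together with a descending induction / acyclicity analysis of $F\otimes_R B'$ in the style of Peskine--Szpiro — to bound the supports and depths of the modules $\H_q(F\otimes_R B')$ and kill the relevant local cohomology.

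The main obstacle is precisely this last depth estimate. In the ordinary new intersection theorem $\H_0(F)$ also has finite length, so every $\H_q(F\otimes_R B')$ is $\fm$-power-torsion, the acyclicity lemma applies verbatim to give $\H_q(F\otimes_R B')=0$ for $q\ge1$, hence $F\otimes_R B'$ is a length-$n$ $B'$-free resolution of $\H_0(F)\otimes_R B'$ and $\depth_\fm(\H_0(F)\otimes_R B')\ge\depth_\fm B'-n=1$. Under hypothesis (2) alone, $\H_0(F)$ need not have finite length, the higher homology of $F\otimes_R B'$ can be nonzero with a possibly non-torsion part controlled by the modules $\Tor^R_q(\H_0(F),B')$ for $q\ge1$, and the acyclicity lemma does not apply on the nose; taming this non-torsion part, using that $\H_0(F)$ has finite projective dimension at every non-maximal prime, is the technical core of the argument and the step I expect to demand the most care.
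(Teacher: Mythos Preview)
Your opening paragraph and the spectral-sequence reduction are correct, but the proof is genuinely incomplete at exactly the point you flag: you never establish the vanishing $\H^{q+1}_\fm\bigl(\H_q(F\otimes_R B')\bigr)=0$ for $q\ge1$. Invoking ``an acyclicity analysis in the style of Peskine--Szpiro'' does not close this gap, because the acyclicity lemma needs depth hypotheses on the terms that you have not verified, and---as you yourself note---the modules $\H_q(F\otimes_R B')$ can carry non-torsion pieces coming from $\Tor^R_q(\H_0(F),B')$. Moreover, the passage from $B$ to $B'=B/(y_1,\dots,y_{d-n-1})B$ is unnecessary (the vanishing $\H^0_\fm(F\otimes_RB)=0$ already holds for $B$ whenever $d>n$, since $\RGamma_\fm B$ sits in cohomological degree $d$) and potentially harmful: $B'$ is no longer balanced big Cohen--Macaulay over $R$, and for primes $\fp\ne\fm$ containing some $y_i$ one may well have $\H_q(F\otimes_RB')_\fp\ne0$, so the hoped-for support condition can fail for $B'$ even when it holds for $B$.

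The paper avoids the spectral sequence by computing the depth of the \emph{complex} $B\otimes_RF$ rather than of the module $\H_0(F)\otimes_RB$. Iyengar's formula gives $\dim R=\depth_RB=\depth_R(B\otimes_RF)+\sup(k\otimes_RF)$, and since $\sup(k\otimes_RF)\le n$ it suffices to show $\depth_R(B\otimes_RF)\le0$. The key technical input---precisely what your outline lacks---is that a balanced big Cohen--Macaulay module has restricted Tor-dimension at most zero: one has $\depth_{R_\fq}B_\fq\ge\depth R_\fq$ for every prime $\fq$ (via Sharp's results), and together with hypothesis~(1) this forces $\sup(B\otimes_RF)_\fp\le0$ for all $\fp\ne\fm$, i.e.\ $\Supp\H_q(B\otimes_RF)\subseteq\{\fm\}$ for all $q>0$. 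Now if $s:=\sup(B\otimes_RF)=0$ then $B\otimes_RF\cong B\otimes_R\H_0(F)$ in $\D(R)$ and your first-paragraph argument gives depth $\le0$; if $s>0$ the top homology is $\fm$-torsion, whence $\depth_R(B\otimes_RF)=-s<0$ directly. Note that this same support statement would also complete your spectral-sequence argument (an $\fm$-torsion module has no higher local cohomology), so the missing idea is identical in both routes; the paper's packaging via depth of a complex is simply more direct.
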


In this paper, we extend this theorem by using a new invariant for modules; we define the {\em little dimension} of an $R$-module $M$ as 
$$
\ldim_R M=\inf\{\dim_R Rx\mid x\in M\setminus \fm M\}.
$$
Note that we have $\ldim_R M\leq \dim_R M$ if $M\neq \fm M$, and $\ldim_R M=\infty$ otherwise.

To state our main theorem, we introduce some notation. 
Let $X=(\cdots \to X_{i+1}\to X_{i}\to X_{i-1}\to \cdots)$ be a complex of $R$-modules.
The {\em supremum}, {\em infimum} and {\em amplitude} of $X$ are defined by
$$
\sup X=\sup\{n\mid\H_n(X)\neq 0\},\quad
\inf X=\inf\{n\mid\H_n(X)\neq 0\},\quad
\amp X= \sup X- \inf X.
$$
The {\em support} of an $R$-module $M$, denoted by $\Supp_RM$, is defined as the set of prime ideals $\fp$ of $R$ with $M_\fp\neq 0$.
The following theorem is the main result of this paper.

\begin{theorem}\label{main theorem}
Let $F=(0\to F_n\to F_{n-1}\to \cdots \to F_{0}\to 0)$ be a complex of flat $R$-modules such that $k \otimes_R F$ has nontrivial homology.
Suppose that either
\begin{itemize}
\item[(a)]
$\amp (B\otimes_R F)=0$ for some balanced big Cohen--Macaulay module $B$, or
\item[(b)]
$\Supp\H_i(F)\subseteq \{\fm\}$ for all $i>0$. 
\end{itemize}
Then there is an inequality
$$
\dim R\leq \ldim_R\H_0(F)+\sup (k\otimes_R F).
$$
\end{theorem}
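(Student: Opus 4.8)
The plan is to imitate the big Cohen--Macaulay proof of Theorem~\ref{INIT}, using a truncation of $F$ to replace the length $n$ by $s:=\sup(k\otimes_RF)$, and a local--cohomology computation against a balanced big Cohen--Macaulay module to replace ``finite length'' by the little dimension.

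\textit{Reductions and truncation.} If $\H_0(F)=\fm\H_0(F)$ then $\ldim_R\H_0(F)=\infty$ and there is nothing to prove; so assume this fails, whence $\H_0(k\otimes_RF)=\H_0(F)/\fm\H_0(F)\ne0$, so $\inf(k\otimes_RF)=0$ and $s$ is a finite integer with $0\le s\le n$. We may likewise assume $d':=\ldim_R\H_0(F)<\infty$ and fix $x\in\H_0(F)\setminus\fm\H_0(F)$ with $\dim_RRx=d'$; put $\fa:=\ann_Rx$, so $Rx\cong R/\fa$ and $\dim R/\fa=d'$. If $\dim R\le s$ the inequality is clear, so assume $d:=\dim R>s$; we must prove $d\le d'+s$. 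Next, since $F$ is a bounded complex of flat modules with $\sup(k\otimes_RF)=s$, the local criterion for flatness, applied repeatedly, lets us truncate: whenever $i>s$ the top differential $F_i\to F_{i-1}$ is injective modulo $\fm$ (as $\H_i(k\otimes_RF)=0$), hence is injective with flat cokernel, and so $F$ is isomorphic in the derived category to a complex of flat modules concentrated in degrees $[0,s]$. This alters neither $\H_0(F)$, nor $\sup(k\otimes_RF)$, nor hypotheses (a), (b); so we may assume $F_i=0$ for $i>s$, and then $\H_i(M\otimes_RF)=0$ for every $R$-module $M$ and all $i>s$.

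\textit{A resolution by big Cohen--Macaulay modules.} In case (a) we are handed a balanced big Cohen--Macaulay module $B$ with $\amp(B\otimes_RF)=0$; since $\H_0(B\otimes_RF)=B\otimes_R\H_0(F)$ surjects onto $(B/\fm B)^{(\kappa)}\ne0$ (with $\kappa$ a $k$-basis of $\H_0(F)/\fm\H_0(F)$), we get $\inf(B\otimes_RF)=0$, so $B\otimes_RF\simeq B\otimes_R\H_0(F)$. In case (b), starting from any balanced big Cohen--Macaulay module and performing Hochster's modifications to annihilate the positive homology of $B\otimes_RF$ (this is possible because $\H_i(F)$ is supported at $\fm$ for $i>0$), one reaches a balanced big Cohen--Macaulay module $B$ with $\H_i(B\otimes_RF)=0$ for $i>0$, and again $B\otimes_RF\simeq B\otimes_R\H_0(F)$. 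In either case each nonzero $B\otimes_RF_i$ is itself a balanced big Cohen--Macaulay module, so
$$
0\to B\otimes_RF_s\to\cdots\to B\otimes_RF_0\to B\otimes_R\H_0(F)\to0
$$
is a resolution of length $s$ of $B\otimes_R\H_0(F)$ by balanced big Cohen--Macaulay modules.

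\textit{The dimension count.} Because $\H^j_\fm(B\otimes_RF_i)=0$ for $j\ne d$, applying $\RGamma_\fm$ to the resolution above gives $\H^j_\fm(B\otimes_R\H_0(F))=0$ for $j<d-s$, i.e.\ $\depth_\fm(B\otimes_R\H_0(F))\ge d-s$. On the other hand, the inclusion $Rx=R/\fa\hookrightarrow\H_0(F)$ with $\dim R/\fa=d'$ is used to produce a nonzero class in $\H^j_\fm(B\otimes_R\H_0(F))$ in some degree $j\le d'$: restricting $B$ along $R\to R/\fa$ to a balanced big Cohen--Macaulay module over $R/\fa$ (so that its $d'$-th local cohomology is nonzero) and transporting this non-vanishing through $B\otimes_RRx\to B\otimes_R\H_0(F)$ by a dimension-shifting argument, one obtains $\depth_\fm(B\otimes_R\H_0(F))\le d'$. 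Combining, $d-s\le\depth_\fm(B\otimes_R\H_0(F))\le d'$, so $d\le d'+s$.

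I expect the crux — and the step where $\ldim$ genuinely earns its keep over ``finite length'' — to be the upper bound $\depth_\fm(B\otimes_R\H_0(F))\le d'$: since $\H_0(F)$ is in general not finitely generated and $Rx$ need not split off, extracting from the single minimal generator $x$ a local-cohomology class in degree $\le d'$ needs care, and this is presumably exactly where hypotheses (a) and (b) do their work, (b) supplying enough $\fm$-torsion to carry out Hochster's modifications and (a) circumventing them. Minor care is also needed in the truncation step and in verifying that $B\otimes_RF_i$ is a balanced big Cohen--Macaulay module when $F_i$ is an arbitrary nonzero flat module.
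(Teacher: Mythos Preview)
Your overall shape in case~(a) is right, and the lower bound $\depth_R(B\otimes_R\H_0(F))\ge d-s$ via local cohomology of the resolution is essentially Iyengar's depth formula $\depth_R(B\otimes_RF)=\depth_RB-\sup(k\otimes_RF)$ applied after the quasi-isomorphism $B\otimes_RF\simeq B\otimes_R\H_0(F)$. But the step you flag as ``the crux'' --- the upper bound $\depth_R(B\otimes_R\H_0(F))\le d'$ --- is not established by your sketch. Restricting $B$ along $R\to R/\fa$ does not give a balanced big Cohen--Macaulay $R/\fa$-module for an arbitrary annihilator ideal $\fa$, and even if $\H^{d'}_\fm(B/\fa B)\ne0$ the map $B\otimes_RRx\to B\otimes_R\H_0(F)$ need not be injective (since $B$ is not flat), so you cannot simply transport a class. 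The paper dispatches this in two lines: one proves the general inequality $\depth_RM\le\ldim_RM$ for any $M\ne\fm M$ (via the observation that $\ann(x)$ kills a minimal generator, plus \cite[Lemma~3.3]{I}), and the tensor estimate $\ldim_R(M\otimes_RN)\le\min\{\ldim_RM,\ldim_RN\}$; together these give $\depth_R(B\otimes_R\H_0(F))\le\ldim_R(B\otimes_R\H_0(F))\le\ldim_R\H_0(F)$ immediately.

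Case~(b) is a genuine gap. Hochster's modifications are designed for finite free complexes and finitely many relations; extending them to kill all positive homology of $B\otimes_RF$ when the $F_i$ are arbitrary flat modules and the $\H_i(F)$ are merely $\fm$-supported (possibly infinitely generated) is not standard and you give no argument. The paper takes a completely different route here: it does \emph{not} arrange $\amp(B\otimes_RF)=0$. Instead it works with $B\otimes_RF$ itself, again using Iyengar's identity $\dim R=\depth_RB=\depth_R(B\otimes_RF)+\sup(k\otimes_RF)$, so that one only needs $\depth_R(B\otimes_RF)\le\ldim_R\H_0(F)$. Setting $s=\sup(B\otimes_RF)$, a restricted Tor-dimension computation (using that $\depth_{R_\fq}B_\fq\ge\dim R_\fq$ for all $\fq$) shows $\Supp\H_i(B\otimes_RF)\subseteq\{\fm\}$ for $i>0$; then for $s>0$ an injective resolution argument gives $\depth_R(B\otimes_RF)=-s<0\le\ldim_R\H_0(F)$. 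No truncation, no modifications, no resolution of $B\otimes_R\H_0(F)$. (Incidentally, your truncation claim ``hence is injective'' is false for general flat modules --- take $F_1=Q\oplus R\to F_0=R^2$ over a DVR --- though the cokernel \emph{is} flat and that is all you actually need.)
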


Under the assumption of Theorem \ref{INIT}, one has $\ldim_R\H_0(F)=0$ and $\sup (k\otimes_R F)\leq n$, and the condition (b) in Theorem \ref{main theorem} is satisfied. 
Thus Theorem \ref{main theorem} extends Theorem \ref{INIT} based on the existence of balanced big Cohen--Macaulay modules.
We also emphasize that the main theorem can treat complexes of infinitely generated flat $R$-modules.

In Section 2, we prove our main Theorem \ref{main theorem}.
In Section 3, we discuss the relationship between little dimensions and Cohen--Macaulay modules.
Section 4 contains some examples concerning little dimensions.
We also observe that the inequality of Theorem \ref{main theorem} can be an equality or a strict inequality.
 
\section{Proof of the main theorem}

Recall that the {\em (Krull) dimension} of an $R$-module $M$, denoted by $\dim_RM$, is defined as the supremum of the lengths of chains of prime ideals in $\Supp_RM$.
Also, the {\em depth} of an $R$-complex $X$ is defined by
$$
\depth_RX=n-\sup(\K(\xx)\otimes_RX)=-\sup \RHom_R(k,X)=-\sup\R\Gamma_\fm(X),
$$
where $\xx=x_1,\dots,x_n$ is a system of generators of $\fm$, and $\K(\xx)$ stands for the Koszul complex on $\xx$; we refer the reader to \cite[Theorem I]{FI} for details.
We give a couple of properties of the little dimension.

\begin{proposition}\label{depth-ldim-dim}
For each $R$-module $M$ satisfying $M\neq \fm M$ one has $\depth_R M \leq \ldim_RM \leq \dim_R M$.
\end{proposition}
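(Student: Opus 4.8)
\emph{Proof proposal.}
The inequality $\ldim_R M\le\dim_R M$ is immediate: if $x\in M$ then $Rx$ is a submodule of $M$, so $\Supp_R(Rx)\subseteq\Supp_R M$ and hence $\dim_R(Rx)\le\dim_R M$; taking the infimum over $x\in M\setminus\fm M$ gives the claim. (If $M=\fm M$ then $\ldim_R M=\infty$ and there is nothing to prove, so below we assume $M\neq\fm M$.) The real content is $\depth_R M\le\ldim_R M$, and since every $x\in M\setminus\fm M$ is in particular nonzero, it suffices to prove
$$
\depth_R M\le\dim_R(Rx)\quad\text{for every nonzero }x\in M .
$$

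The plan is to reduce this to a statement about associated primes. Fix a nonzero $x\in M$ and put $\fa=\ann_R(x)$, so that $Rx\cong R/\fa$ and $d:=\dim_R(Rx)=\dim R/\fa$. Since $\dim R/\fa=\max\{\dim R/\fq\mid\fq\in\Min_R(\fa)\}$ and $\Min_R(\fa)\subseteq\Ass_R(R/\fa)$, I can choose $\fp\in\Ass_R(R/\fa)$ with $\dim R/\fp=d$; as $Rx\cong R/\fa$ is a submodule of $M$, also $\fp\in\Ass_R M$. Recalling from the discussion preceding this proposition that $\depth_R M=-\sup\RHom_R(k,M)=\inf\{\,i\mid\Ext_R^i(k,M)\neq 0\,\}$, it is therefore enough to establish:
$$
\fp\in\Ass_R M\ \Longrightarrow\ \Ext_R^{\dim R/\fp}(k,M)\neq 0 .
$$
(For finitely generated $M$ this is exactly the classical inequality $\depth_R M\le\dim R/\fp$ for $\fp\in\Ass_R M$.)

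To prove this implication I would work with the minimal injective resolution $0\to M\to I^{0}\to I^{1}\to\cdots$ of $M$. As $\fp\in\Ass_R M$, the injective hull $E_R(R/\fp)$ is a direct summand of $I^{0}$, i.e.\ the zeroth Bass number $\mu^{0}(\fp,M)$ is nonzero. Since $R$ is noetherian local, $\dim R$ is finite, so among all chains of primes from $\fp$ to $\fm$ one can pick one of maximal length, say $\fp=\fp_{0}\subsetneq\fp_{1}\subsetneq\cdots\subsetneq\fp_{d}=\fm$; its length equals $\dim R/\fp=d$, and it is saturated (otherwise a prime could be inserted, contradicting maximality). Bass's theorem on consecutive Bass numbers then applies at each step $\fp_{i}\subsetneq\fp_{i+1}$ and gives, by induction on $i$, that $\mu^{i}(\fp_{i},M)\neq 0$ for all $i\le d$; in particular $\mu^{d}(\fm,M)\neq 0$, that is, $\Ext_R^{d}(k,M)\neq 0$. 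Hence $\depth_R M\le d=\dim_R(Rx)$, as required.

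The step that genuinely needs attention is that $M$ is allowed to be infinitely generated. The usual proof of $\depth_R M\le\dim R/\fp$ for $\fp\in\Ass_R M$ proceeds by extending an $M$-regular sequence of length $\depth_R M$ and running a noetherian induction on $\Ass_R$, but neither ingredient is available here: when $\depth_R M\ge 1$ there need not exist any $M$-regular element (for instance $M=\bigoplus_{\hight\fq=1}R/\fq$ over a two-dimensional regular local ring $R=k[[s,t]]$ has $\depth_R M=1$, yet every element of $\fm$ is a zerodivisor on $M$). Passing instead through the minimal injective resolution and Bass's monotonicity of consecutive Bass numbers circumvents this, since that monotonicity requires no finiteness hypothesis on $M$. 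The remaining points are bookkeeping: $x\notin\fm M$ forces $x\neq 0$, hence $\fa\neq R$ and $d\ge 0$; and the case $M=\fm M$ is vacuous.
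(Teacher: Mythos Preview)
Your proof is correct, and in fact you prove slightly more than needed: the bound $\depth_R M\le\dim_R(Rx)$ for every nonzero $x\in M$, not only for $x\notin\fm M$. The paper's proof, by contrast, simply picks $x\in M\setminus\fm M$ with $\ldim_RM=\dim_RRx$, writes $Rx\cong R/I$, and then invokes \cite[Lemma~3.3]{I} to obtain $\depth_R M\le\dim R/I$ in one stroke.

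What you have done is essentially unpack the mechanism behind that cited lemma: you pass from $x$ to an associated prime $\fp\in\Ass_RM$ with $\dim R/\fp=\dim_R(Rx)$, and then climb a saturated chain $\fp=\fp_0\subsetneq\cdots\subsetneq\fp_d=\fm$ using Bass's monotonicity of consecutive Bass numbers to conclude $\mu^d(\fm,M)\ne0$. Your observation that this monotonicity requires no finiteness hypothesis on $M$ is correct and is exactly the point that makes the argument go through here; it is also good that you explain why the naive regular-sequence argument fails for infinitely generated $M$. The trade-off is clear: the paper's route is shorter by outsourcing to Iyengar, while yours is self-contained and makes transparent which classical fact carries the weight.
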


\begin{proof}
The second inequality is clear, which already appeared in the previous section.
To show the first one, choose an element $x\in M\setminus \fm M$ with $\ldim_RM=\dim_R Rx$.
Then there is an ideal $I$ of $R$ such that $Rx\cong R/I$ and $Ix=0$.
Thus the first inequality follows from \cite[Lemma 3.3]{I}.
\end{proof}

\begin{lemma}\label{ldim-tensor}
Let $M$ and $N$ be $R$-modules such that $M\neq \fm M$ and $N\neq \fm N$.
\begin{enumerate}[\rm(1)]
\item
Let $x\in M\setminus \fm M$ and $y\in  N\setminus \fm N$.
Then $x\otimes y\in(M\otimes_RN) \setminus \fm (M\otimes_RN)$.
In particular, $M\otimes_RN\ne0$.
\item
There is an inequality $\ldim_R(M\otimes_R N)\leq \min\{\ldim_RM,\,\ldim_RN\}$.
\end{enumerate}
\end{lemma}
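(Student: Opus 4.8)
The plan is to establish part~(1) directly by base change to the residue field, and then to deduce part~(2) from~(1) together with elementary facts about supports of cyclic modules. For~(1), I would use the canonical isomorphism
$$
k\otimes_R(M\otimes_RN)\;\cong\;(k\otimes_RM)\otimes_k(k\otimes_RN),
$$
which holds because $k\otimes_RM$ is already a $k$-vector space, so $(k\otimes_RM)\otimes_RN\cong(k\otimes_RM)\otimes_k(k\otimes_RN)$. Under this identification the image of $x\otimes y$ in $(M\otimes_RN)/\fm(M\otimes_RN)=k\otimes_R(M\otimes_RN)$ is $\bar x\otimes\bar y$, where $\bar x\in M/\fm M$ and $\bar y\in N/\fm N$ are the images of $x$ and $y$. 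Since $x\notin\fm M$ and $y\notin\fm N$, both $\bar x$ and $\bar y$ are nonzero; extending each to a $k$-basis shows that $\bar x\otimes\bar y$ is a basis vector of $(M/\fm M)\otimes_k(N/\fm N)$, in particular nonzero. Hence $x\otimes y\notin\fm(M\otimes_RN)$, and a fortiori $M\otimes_RN\neq0$.

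For~(2), I would first observe that, since $M\neq\fm M$ and $N\neq\fm N$, the infima defining $\ldim_RM$ and $\ldim_RN$ are taken over nonempty sets of nonnegative integers, hence are attained. Choose $x\in M\setminus\fm M$ and $y\in N\setminus\fm N$ with $\dim_RRx=\ldim_RM$ and $\dim_RRy=\ldim_RN$. By part~(1), $x\otimes y\in(M\otimes_RN)\setminus\fm(M\otimes_RN)$, so $\ldim_R(M\otimes_RN)\leq\dim_RR(x\otimes y)$, and it remains to bound this last quantity by $\min\{\dim_RRx,\dim_RRy\}$. The canonical map $Rx\otimes_RRy\to M\otimes_RN$ has image $R(x\otimes y)$, because $Rx\otimes_RRy$ is generated over $R$ by the single element $x\otimes y$ (using $rx\otimes sy=rs\,(x\otimes y)$); thus $R(x\otimes y)$ is a quotient of $Rx\otimes_RRy$ and $\dim_RR(x\otimes y)\leq\dim_R(Rx\otimes_RRy)$. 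Since $Rx$ and $Ry$ are finitely generated over the noetherian ring $R$, one has $\Supp_R(Rx\otimes_RRy)=\Supp_RRx\cap\Supp_RRy$, whence $\dim_R(Rx\otimes_RRy)\leq\min\{\dim_RRx,\dim_RRy\}=\min\{\ldim_RM,\ldim_RN\}$. Chaining these inequalities gives~(2).

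I do not expect a genuine obstacle here: the argument is essentially formal once one has the identification of $k\otimes_R(M\otimes_RN)$ used in~(1) and the support formula $\Supp_R(Rx\otimes_RRy)=\Supp_RRx\cap\Supp_RRy$ used in~(2). The only mildly delicate point is verifying that $R(x\otimes y)$ is precisely the image of $Rx\otimes_RRy$ in $M\otimes_RN$, which is what turns part~(1) into a usable bound on the little dimension of the tensor product.
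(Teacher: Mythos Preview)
Your proof is correct and follows essentially the same approach as the paper's. You supply more detail for the final step of~(2)---bounding $\dim_R R(x\otimes y)$ via the surjection $Rx\otimes_R Ry\twoheadrightarrow R(x\otimes y)$ and the support formula---where the paper simply asserts the inequality $\dim_RR(x\otimes y)\leq \min\{\dim_R Rx,\dim_R Ry\}$ without further justification.
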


\begin{proof}
(1) The elements $\overline{x}\in M/\fm M$ and $\overline{y}\in N/\fm N$ are nonzero, and so is $\overline{x}\otimes \overline{y}\in (M/\fm M)\otimes_k (N/\fm N)$.

(2) Choose elements $x\in M\setminus \fm M$ and $y\in  N\setminus \fm N$ such that $\ldim_RM=\dim_R Rx$ and $\ldim_RN=\dim_R Ry$.
Using (1), we have $x\otimes y\in (M\otimes_RN) \setminus \fm (M\otimes_RN)$, and get the inequality $\ldim_R(M\otimes_RN)\leq \dim_R R(x\otimes y)$.
It remains to note that $\dim_RR(x\otimes y)\leq \min\{\dim_R Rx,\,\dim_R Ry\}$ holds.
\end{proof}

Denote by $\D(R)$ the unbounded derived category of all $R$-modules.
For an $R$-complex $X$ we denote by $\Td_RX$ the {\em restricted Tor-dimension}, which is by definition the supremum of $\sup(T\otimes_RX)$ where $T$ runs through the flat $R$-modules.
Now we can prove our main theorem.

\begin{proof}[{\bf Proof of Theorem \ref{main theorem}}]
Let $B$ be a balanced big Cohen--Macaulay $R$-module.
If $\H_0(F)=\fm\H_0(F)$, then $\ldim_R\H_0(F)=\infty$, and there is nothing to prove.
Hence, we assume $\H_0(F)\neq \fm\H_0(F)$.
It follows from Lemma \ref{ldim-tensor}(1) that $\H_0(B\otimes_R F) \cong B\otimes_R\H_0(F)\neq 0$.
In particular, we have $\inf (B\otimes_R F)=0$.
By the fact that $B$ is big Cohen--Macaulay and \cite[Theorem 2.1]{I}, we get $\dim R=\depth_R B=\depth_R(B\otimes_R F)+\sup(k\otimes_R F)$.
Thus, it suffices to show $\depth_R(B\otimes_RF)\le\ldim_R\H_0(F)$.

(a) As $\inf (B\otimes_R F)=0$, it hold that $\sup (B\otimes_R F)=0$ by assumption.
Hence $B\otimes_R F\cong\H_0(B\otimes_R F)\cong B\otimes_R\H_0(F)$ in $\D(R)$.
We have $B\otimes_R \H_0(F)\ne\fm(B\otimes_R \H_0(F))$ by Lemma \ref{ldim-tensor}(1).
Therefore $\depth_R(B\otimes_R \H_0(F))\le\ldim_R\H_0(F)$ by Proposition \ref{depth-ldim-dim} and Lemma \ref{ldim-tensor}(2).
Now the inequality $\depth_R(B\otimes_RF)\le\ldim_R\H_0(F)$ follows.

(b) Set $s=\sup(B\otimes_RF)$.
If $s=0$, then $\amp (B\otimes_RF)=0$ since $\inf (B\otimes_R F)=0$, and so we can apply (a) to deduce the assertion.
Hence, we assume $s>0$.

Let us show that $\Supp\H_i(B\otimes_RF)\subseteq \{\fm\}$ for all $i>0$.
Fix a nonmaximal prime ideal $\fp$ of $R$.
It follows from \cite[Proposition (5.3.4) and Theorems (5.3.6), (5.3.8)]{C} that
\begin{equation}\label{r1}\tag{$\diamondsuit$}
\sup (B\otimes_RF)_\fp-\sup F_\fp\le \Td_R B=\sup\{\depth R_\fq-\depth_{R_q} B_\fq\mid\fq\in \Spec R\}.
\end{equation}
If $\kappa(\fq)\Lotimes B\neq 0$, then we see from \cite[Remark 2.9]{F} and \cite[Theorems (3.2) and (3.3)]{S} that $\depth_{R_\fq}B_\fq=\dim R_\fq$.
Hence $\depth R_\fq-\depth_{R_\fq} B_\fq\leq 0$.
If $ \kappa(\fq)\Lotimes B=0$, then $\RHom_{R_\fq}(\kappa(\fq), B_\fq)=0$;  see \cite[Proposition 2.8]{F}.
In other words,  $\depth_{R_\fq}B_\fq=\infty$.
Thus, the last term of \eqref{r1} is nonpositive, which implies $\sup (B\otimes_R F)_\fp\le\sup F_\fp$.
As $\sup F_\fp\le 0$ by assumption, we obtain $\sup (B\otimes_R F)_\fp\leq 0$, which shows $\H_i(B\otimes_R F)_\fp=0$ for all $i>0$. 
Consequently, $\Supp\H_i(B\otimes_R F)\subseteq \{\fm\}$ for all $i>0$, as required.

By the proof of \cite[Chapter I, Lemma 4.6(3)]{Ha}, we get an injective resolution $I=(0\to I_s\to I_{s-1}\to\cdots)$ of the $R$-complex $B\otimes_RF$ as $\sup(B\otimes_RF)=s$, and
$$
\depth_R(B\otimes_RF)=-\sup\Gamma_\fm(I)=-\sup(0\to\Gamma_\fm(I_s)\to\Gamma_\fm(I_{s-1})\to\cdots).
$$
Since $\Supp\H_s(B\otimes_R F)\subseteq \{\fm\}$, we observe that $\Gamma_\fm(I_s)=I_s$, which implies $\H_s(\Gamma_\fm(I))=\H_s(I)=\H_s(B\otimes_RF)\ne0$.
Consequently, we get $\depth_R(B\otimes_RF)=-s<0\le\ldim_R\H_0(F)$.
\end{proof}

\begin{remark}
We may wonder whether or not in Theorem \ref{main theorem} the condition (b) implies the condition (a).
This implication does not necessarily hold even for a bounded complex of free modules of finite rank.
Indeed, suppose that $R$ is not regular.
Take a minimal system of generators $\xx=x_1,\dots,x_n$ of the maximal ideal $\fm$, and let $F=\K(\xx)$ be a Koszul complex.
Then, clearly, the condition (b) is satisfied.
As $\H_0(B\otimes F)=B/\fm B\ne0$, if (a) is satisfied as well, then $B\otimes F$ is acyclic, and it follows that
$$
\depth_RB=n-\sup(\K(\xx)\otimes_RB)=n>\dim R\ge\dim_RB,
$$
which contradicts Proposition \ref{depth-ldim-dim}.
Therefore, the condition (a) is not satisfied.

It is seen from the proof of Theorem \ref{main theorem} and \cite[Theorem 2.1]{I} that for a complex $F$ which does not satisfy (a) but satisfies (b), the inequality in Theorem \ref{main theorem} is strict.
In fact, in the above example we have
$$
\dim R < n = 0 + n = \ldim\H_0(F) + \sup(k\otimes_RF).
$$
\end{remark}

Recall that for an ideal $I$ of $R$, the {\em codimension} of $I$ is defined by $\codim I=\dim R-\dim R/I$.
For a complex $F=(0 \to F_n \xrightarrow{d_n} \cdots \xrightarrow{d_1} F_0 \to 0)$ of free $R$-modules of finite rank, the {\em codimension} of $F$ is defined by $\codim F=\inf_{1\le i\le n}(\codim\I_{r_i}(d_i)-i)$, where $r_i=\sum_{j=i}^n(-1)^{j-i}\rank F_j$ (this is called in \cite{BH} the {\em expected rank} of $d_i$).
Using our Theorem \ref{main theorem}, we recover \cite[Theorem 9.4.1]{BH}:

\begin{corollary}\label{BH thm}
Let $F=(0 \to F_n\to \cdots \to F_0 \to 0)$ be a complex of finitely generated free $R$-modules with $\codim F\geq 0$.
For each $x\in\H_0(F)\setminus \fm\H_0(F)$ one has $\codim(\ann(x)) \le n$.
\end{corollary}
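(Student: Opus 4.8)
The plan is to apply Theorem~\ref{main theorem}(a) to $F$ itself, using a balanced big Cohen--Macaulay $R$-module $B$ (which exists by~\cite{A}). Fix $x\in\H_0(F)\setminus\fm\H_0(F)$. Since $Rx$ is cyclic, $Rx\cong R/\ann(x)$, so $\dim_RRx=\dim R/\ann(x)$ and hence $\codim(\ann(x))=\dim R-\dim_RRx$. Also $\ldim_R\H_0(F)\le\dim_RRx$ by the definition of the little dimension; the complex $k\otimes_RF$ is concentrated in homological degrees $0,\dots,n$, so $\sup(k\otimes_RF)\le n$; and $k\otimes_RF$ has nontrivial homology because $\H_0(k\otimes_RF)=\H_0(F)/\fm\H_0(F)$ contains the nonzero element $\overline x$. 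Granting that Theorem~\ref{main theorem}(a) applies, it gives
\[
\dim R\;\le\;\ldim_R\H_0(F)+\sup(k\otimes_RF)\;\le\;\dim_RRx+n,
\]
which rearranges to $\codim(\ann(x))=\dim R-\dim_RRx\le n$.

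So the task is to verify the hypothesis of Theorem~\ref{main theorem}(a): $\amp(B\otimes_RF)=0$. Since $B\otimes_RF$ lives in non-negative homological degrees and $\H_0(B\otimes_RF)\cong B\otimes_R\H_0(F)\ne0$ by Lemma~\ref{ldim-tensor}(1), one has $\inf(B\otimes_RF)=0$; thus it suffices to prove $\H_i(B\otimes_RF)=0$ for all $i>0$, i.e.\ that $F$ becomes acyclic in positive degrees after tensoring with $B$. Now $\codim F\ge0$ means $\codim\I_{r_i}(d_i)\ge i$ for $1\le i\le n$. A standard descending induction shows that this forces the Buchsbaum--Eisenbud expected ranks to be attained, $\rank d_i=r_i$ (each $\I_{r_i}(d_i)$ is nonzero, having positive codimension, so $\rank d_i\ge r_i$, while $d_id_{i+1}=0$ gives $\rank d_i\le r_i$ once the claim holds for $i+1$). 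On the other hand, for a balanced big Cohen--Macaulay module $B$ and a proper ideal $\mathfrak a$ with $c=\codim(\mathfrak a)\ge1$, the ideal $\mathfrak a$ lies in no minimal prime of $R$ of maximal coheight, so by prime avoidance and Krull's principal ideal theorem one can pick $a_1,\dots,a_c\in\mathfrak a$ that are part of a system of parameters of $R$; such a sequence is $B$-regular, whence $\operatorname{grade}_B(\mathfrak a)\ge\codim(\mathfrak a)$. Taking $\mathfrak a=\I_{r_i}(d_i)$ gives $\operatorname{grade}_B\I_{r_i}(d_i)\ge i$, and the acyclicity criterion of Buchsbaum and Eisenbud applied to the module $B$ (see \cite[\S1.4]{BH}) shows that $B\otimes_RF$ is acyclic in positive degrees, as required.

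The only non-formal ingredient is this last acyclicity, whose crux is the inequality $\operatorname{grade}_B(\mathfrak a)\ge\codim(\mathfrak a)$ for balanced big Cohen--Macaulay $B$; everything else is the bookkeeping of the first paragraph together with a direct appeal to Theorem~\ref{main theorem}(a). I should remark that going through a big Cohen--Macaulay module is essential, since condition (b) of Theorem~\ref{main theorem} may fail under $\codim F\ge0$: over $R=k[[x,y,z]]/(xz,yz)$ the complex $F=(0\to R\xrightarrow{\,x\,}R\to0)$ satisfies $\codim F=0$, yet $\H_1(F)\cong R/(x,y)$ is supported at the minimal prime $(x,y)\ne\fm$.
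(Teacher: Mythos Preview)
Your proof is correct and follows the same route as the paper's: verify condition (a) of Theorem~\ref{main theorem} for a balanced big Cohen--Macaulay module $B$, then read off the inequality. The only difference is packaging: the paper cites \cite[Lemma 9.1.8]{BH} for the acyclicity of $B\otimes_RF$ in positive degrees, whereas you unwind that lemma by checking the Buchsbaum--Eisenbud rank conditions and the grade inequality $\operatorname{grade}_B\I_{r_i}(d_i)\ge\codim\I_{r_i}(d_i)\ge i$ directly. Your closing example showing that condition (b) can fail under $\codim F\ge0$ is a nice addition not present in the paper.
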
 

\begin{proof}
Let $B$ be a balanced big Cohen--Macaulay $R$-module.
Lemma \ref{ldim-tensor}(1) implies $\H_0(B\otimes_R F)\cong B\otimes_R H_0(F)\neq 0$.
By \cite[Lemma 9.1.8]{BH} we have $\inf(B\otimes_R F)=0$, whence $\amp(B\otimes_R F)=0$.  
Theorem \ref{main theorem} yields $\dim R\le\ldim_R H_0(F)+\sup(k\otimes_R F)\le \ldim_R H_0(F)+n \le \dim_RRx+n$, which shows $\codim(\ann(x))\le n$.
\end{proof}

\section{Little Cohen--Macaulay modules}

Let $M$ be a finitely generated $R$-module.
Recall that $M$ is called {\em Cohen--Macaulay} if $\depth_RM=\dim_RM$.
Following this, we say that $M$ is little Cohen--Macaulay if $\depth_R M=\ldim_R M$.
Also, recall that the {\em Cohen--Macaulay defect} of $M$ is defined by
$$
\cmd_R M = \dim_R M-\depth_R M.
$$
Following this, we define the {\em little Cohen--Macaulay defect} of $M$ by
$$
\lcmd_R M = \ldim_R M-\depth_R M.
$$
On the other hand, we denote by $\g_RM$ the {\em upper Gorenstein dimension} of $M$, that is, the infimum of $\pd_S(M\otimes_RR')-\pd_SR'$, where $R\to R'$ runs over the faithfully flat homomorphisms and $S\to R'$ runs over those surjective homomorphisms which satisfy $\RHom_S(R',S)\cong R'[-g]$ with $g=\pd_SR'<\infty$.

\begin{remark}\label{r2}
Let $M$ be a finitely generated $R$-module.

(1) Assume $M\ne0$.
Then $\ldim_R M\leq \dim_R M$ by Proposition \ref{depth-ldim-dim}.
Hence $\lcmd_R M\leq \cmd_R M$.

(2) There is an inequality $\g_RM \leq \pd_R M$, and the equality holds if the right-hand side is finite.
We refer the reader to \cite{V} for details.
\end{remark}

Using the little Cohen--Macaulay defect and our Theorem \ref{main theorem}, we can improve a theorem of Sharif and Yassemi \cite{SY} concerning the Cohen--Macaulay defect.

\begin{theorem}\label{scdm}
Let $M\ne0$ be a finitely generated $R$-module of finite upper Gorenstein dimension.
Then
$$
\cmd R \leq \lcmd_R M,\qquad
\dim R\leq \ldim_RM + \g_RM.
$$
\end{theorem}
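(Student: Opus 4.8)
The plan is to derive both inequalities from the single one $\dim R\le\ldim_R M+\g_R M$. By the Auslander--Bridger formula for the upper Gorenstein dimension (see \cite{V}), finiteness of $\g_R M$ gives $\g_R M+\depth_R M=\depth R$. Subtracting $\depth_R M$ and rearranging, the inequality $\dim R\le\ldim_R M+\g_R M$ becomes $\dim R-\depth R\le\ldim_R M-\depth_R M$, that is, $\cmd R\le\lcmd_R M$. Hence it suffices to prove $\dim R\le\ldim_R M+\g_R M$.

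To this end I would realize $g:=\g_R M$ by a base change. Let $R\to R'$ and $S\to R'$ be as in the definition of $\g_R M$, with $R\to R'$ faithfully flat, $S\to R'$ a surjection of noetherian local rings, $I:=\Ker(S\to R')$ and $c:=\pd_S R'<\infty$ such that $\RHom_S(R',S)\cong R'[-c]$. Then $\Ext_S^i(R',S)=0$ for $i<c$ and $\Ext_S^c(R',S)\cong R'\ne0$, so the grade of $I$ in $S$ equals $c$; since $I\subseteq\ann_S L$ for every nonzero $S/I$-module $L$, it follows that $\pd_S L\ge c$, in particular $\pd_S(M\otimes_R R')\ge c$. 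Thus the infimum defining $\g_R M$ runs over non-negative integers, so (being finite) it is attained; fix a choice of data attaining it, and set $N:=M\otimes_R R'$ and $p:=\pd_S N=g+c$.

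Next I would apply Theorem \ref{main theorem} over $S$ to a minimal free resolution $G=(0\to G_p\to\cdots\to G_0\to0)$ of $N$. Condition (b) holds trivially since $\H_i(G)=0$ for $i>0$; moreover $k_S\otimes_S G$ (with $k_S$ the residue field of $S$) has nonzero homology because $N\ne0$ (as $M\ne0$ and $R\to R'$ is faithfully flat), and $\sup(k_S\otimes_S G)=\pd_S N=p$ by minimality. Hence Theorem \ref{main theorem} yields $\dim S\le\ldim_S N+p$. Two estimates now transfer this back to $R$. First, taking a minimal prime $\fq$ of $I$ with $\dim S/\fq=\dim S/I$, we have $\hight\fq\ge\hight I\ge c$ because the grade of $I$ is $c$, so $\dim S\ge\dim S/\fq+\hight\fq\ge\dim S/I+c=\dim R'+c=\dim R+\dim(R'/\fm R')+c$, the last equality by faithful flatness of $R\to R'$. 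Second, for $x\in M\setminus\fm M$ one checks, as in Lemma \ref{ldim-tensor}(1), that $x\otimes1\notin\fm_S N$, and $R'(x\otimes1)\cong R'/(\ann_R x)R'$; since $\ann_R x\subseteq\fm$, the dimension formula for the faithfully flat map $R/\ann_R x\to R'/(\ann_R x)R'$ gives $\dim_{R'}R'(x\otimes1)=\dim_R Rx+\dim(R'/\fm R')$. Taking the infimum over such $x$, and noting that the little dimension of the $R'$-module $N$ is the same computed over $R'$ or over $S$, we obtain $\ldim_S N\le\ldim_R M+\dim(R'/\fm R')$.

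Combining the three estimates, $\dim R+\dim(R'/\fm R')+c\le\dim S\le\ldim_S N+p\le\ldim_R M+\dim(R'/\fm R')+g+c$, and cancelling $\dim(R'/\fm R')+c$ gives $\dim R\le\ldim_R M+g$, as desired. I expect the main obstacle to be not any single hard step but the reduction to finite projective dimension over $S$: one must carry along the closed-fibre contribution $\dim(R'/\fm R')$ and verify that it enters the lower bound for $\dim S$ and the upper bound for $\ldim_S N$ in the same way, so that it cancels. The argument also leans on the Auslander--Bridger formula for the upper Gorenstein dimension from \cite{V}, and on the routine facts that a minimal free resolution over $S$ satisfies condition (b) of Theorem \ref{main theorem} with supremum $\pd_S N$ after tensoring with the residue field, and that little dimension is insensitive to regarding $N$ as an $R'$-module rather than an $S$-module.
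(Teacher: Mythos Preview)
Your proposal is correct and follows essentially the same route as the paper. The paper's proof is a one-liner deferring to \cite[Theorem 2.1]{SY} with the substitutions $\dim_R M\rightsquigarrow\ldim_R M$ and ``new intersection theorem''$\rightsquigarrow$ Theorem~\ref{main theorem}; what you have written is precisely an unpacking of that argument---realize $\g_R M$ by data $R\to R'\leftarrow S$, apply Theorem~\ref{main theorem} over $S$ to a minimal free resolution of $M\otimes_R R'$, and transfer back via the flat dimension formula, using Remark~\ref{depth ldim} and (implicitly) Lemma~\ref{ldim-tensor}(1) to control $\ldim_S(M\otimes_R R')$ in terms of $\ldim_R M$.
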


\begin{proof}
The assertion follows by replacing $\cmd_R M$ and $\dim_R M$ in the proof of \cite[Theorem 2.1]{SY} with $\lcmd_R M$ and $\ldim_R M$ respectively, and using Theorem \ref{main theorem} instead of the new intersection theorem.
\end{proof}

\begin{remark}
In view of Remark \ref{r2}(1), Theorem \ref{scdm} gives a refinement of \cite[Theorem 2.1]{SY}.
\end{remark}

We obtain a couple of corollaries of Theorem \ref{scdm}.

\begin{corollary}\label{33}
Let $M\ne0$ be a finitely generated $R$-module of finite projective dimension.
Then
$$
\cmd R \leq \lcmd_R M,\qquad
\dim R\leq \ldim_RM + \pd_RM.
$$
\end{corollary}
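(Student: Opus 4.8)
The plan is to obtain this as an immediate specialization of Theorem \ref{scdm} via Remark \ref{r2}(2). First I would note that, since $M\ne0$ is finitely generated with $\pd_RM<\infty$, Remark \ref{r2}(2) applies and gives $\g_RM=\pd_RM<\infty$; in particular $M$ has finite upper Gorenstein dimension, so $M$ satisfies the hypothesis of Theorem \ref{scdm}. (It is also worth recording that $M\ne0$ finitely generated forces $M\ne\fm M$ by Nakayama's lemma, so that $\ldim_RM$, and hence $\lcmd_RM$, is a genuine finite integer and the asserted inequalities are meaningful.)

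Then I would simply invoke Theorem \ref{scdm}. Its first conclusion is already the desired inequality $\cmd R\le\lcmd_RM$. Its second conclusion reads $\dim R\le\ldim_RM+\g_RM$, and substituting $\g_RM=\pd_RM$ yields $\dim R\le\ldim_RM+\pd_RM$. This completes the argument.

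Since the statement is a direct corollary, I do not expect any real obstacle. The one point that must be cited carefully is the comparison of invariants in Remark \ref{r2}(2) (established in \cite{V}): finite projective dimension implies finite upper Gorenstein dimension. In fact one does not even need the full equality $\g_RM=\pd_RM$ here, only the inequality $\g_RM\le\pd_RM$ together with the finiteness, since then $\dim R\le\ldim_RM+\g_RM\le\ldim_RM+\pd_RM$; invoking the equality just makes the deduction a touch cleaner.
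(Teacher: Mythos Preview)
Your proposal is correct and follows essentially the same approach as the paper: invoke Remark \ref{r2}(2) to see that finite projective dimension gives finite upper Gorenstein dimension, and then apply Theorem \ref{scdm}. The only cosmetic difference is that for the second inequality the paper deduces it from the first one together with the Auslander--Buchsbaum formula (since $\cmd R\le\lcmd_RM$ rewrites as $\dim R\le\ldim_RM+(\depth R-\depth_RM)=\ldim_RM+\pd_RM$), whereas you substitute $\g_RM=\pd_RM$ directly into the second conclusion of Theorem \ref{scdm}; both routes are valid and equally short.
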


\begin{proof}
The first inequality follows from Theorem \ref{scdm} and Remark \ref{r2}(2).
The second inequality is an immediate consequence of the first one and the Auslander--Buchsbaum formula.
\end{proof}

\begin{corollary}\label{r3}
The following are equivalent.
\begin{enumerate}[\rm(1)]
\item
The local ring $R$ is Cohen--Macaulay.
\item
There exists a Cohen--Macaulay $R$-module of finite projective dimension.
\item
There exists a Cohen--Macaulay $R$-module of finite upper Gorenstein dimension.
\item
There exists a little Cohen--Macaulay $R$-module of finite projective dimension.
\item
There exists a little Cohen--Macaulay $R$-module of finite upper Gorenstein dimension.
\end{enumerate}
\end{corollary}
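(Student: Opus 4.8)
The plan is to prove the chain of equivalences $(1)\Rightarrow(2)\Rightarrow(3)\Rightarrow(1)$ together with $(2)\Rightarrow(4)\Rightarrow(5)\Rightarrow(3)$, which closes the cycle. The implication $(1)\Rightarrow(2)$ is essentially classical: if $R$ is Cohen--Macaulay, then $R$ itself is a Cohen--Macaulay module of finite (indeed zero) projective dimension, so there is nothing to do. The implications $(2)\Rightarrow(3)$ and $(4)\Rightarrow(5)$ follow immediately from Remark \ref{r2}(2), since a module of finite projective dimension has finite upper Gorenstein dimension. Likewise $(2)\Rightarrow(4)$ and $(3)\Rightarrow(5)$ are trivial from Remark \ref{r2}(1): any Cohen--Macaulay module $M\ne0$ satisfies $\cmd_RM=0$, hence $\lcmd_RM=0$ by Remark \ref{r2}(1), so $M$ is little Cohen--Macaulay. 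So the only substantive content is the implication $(5)\Rightarrow(1)$ (equivalently $(3)\Rightarrow(1)$, which follows since a Cohen--Macaulay module is little Cohen--Macaulay and hence $(3)$ is a special case of $(5)$).

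The key step is therefore: given a little Cohen--Macaulay module $M\ne0$ of finite upper Gorenstein dimension, deduce that $R$ is Cohen--Macaulay. For this I would invoke Theorem \ref{scdm} directly. That theorem gives $\cmd R\le\lcmd_RM$. Since $M$ is little Cohen--Macaulay, $\lcmd_RM=\ldim_RM-\depth_RM=0$ by definition, so $\cmd R\le 0$. As $\cmd R=\dim R-\depth R\ge 0$ always holds, we conclude $\cmd R=0$, i.e.\ $R$ is Cohen--Macaulay. This is the heart of the argument, and it is short once Theorem \ref{scdm} is in hand.

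I would organize the write-up as: first observe $(1)\Rightarrow(2)$; then $(2)\Rightarrow(3)$, $(2)\Rightarrow(4)$, $(4)\Rightarrow(5)$, $(3)\Rightarrow(5)$ all follow from Remark \ref{r2}; finally $(5)\Rightarrow(1)$ is the application of Theorem \ref{scdm} described above. In fact the cleanest phrasing is to prove $(1)\Rightarrow(2)\Rightarrow(3)\Rightarrow(1)$ as one cycle and separately $(2)\Rightarrow(4)\Rightarrow(5)\Rightarrow(1)$, noting that $(5)\Rightarrow(1)$ and $(3)\Rightarrow(1)$ are the same application of Theorem \ref{scdm} (with $(3)$ being a special case of $(5)$ via Remark \ref{r2}(1)).

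I do not anticipate a genuine obstacle here: all the hard work has already been done in Theorem \ref{main theorem} and Theorem \ref{scdm}. The only point requiring a little care is to make sure the definitional unwinding $\lcmd_RM=0\iff M$ little Cohen--Macaulay is stated cleanly, and that we are correctly using $\cmd R\ge 0$ (which holds for any noetherian local ring) to turn the inequality $\cmd R\le\lcmd_RM=0$ into an equality. One should also note that $M\ne0$ is needed so that $\ldim_RM$ and $\depth_RM$ are finite and the defects are well defined, which matches the hypotheses in each item.
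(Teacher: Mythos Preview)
Your proposal is correct and follows essentially the same approach as the paper: the easy implications are handled via Remark \ref{r2}, and the substantive implication $(5)\Rightarrow(1)$ is deduced immediately from the inequality $\cmd R\le\lcmd_RM$ in Theorem \ref{scdm}. The only cosmetic difference is that for $(1)\Rightarrow(2)$ the paper exhibits $R/Q$ with $Q$ a parameter ideal, whereas you take $R$ itself; both work.
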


\begin{proof}
The implications (2) $\Rightarrow$ (4) and (3) $\Rightarrow$ (5) are shown by Remark \ref{r2}(1), while the implications (2) $\Rightarrow$ (3) and (4) $\Rightarrow$ (5) follow from Remark \ref{r2}(2).
If $R$ is Cohen--Macaulay, then the $R$-module $R/Q$ with $Q$ a parameter ideal has finite length and finite projective dimension.
This shows (1) $\Rightarrow$ (2).
It is immediate from the first inequality in Theorem \ref{scdm} that (5) $\Rightarrow$ (1) holds.
Now the five conditions are proved to be equivalent.
\end{proof}

Let $R$ be a Cohen--Macaulay local ring.
In view of Corollary \ref{r3}, it is natural to ask if there exists a non-Cohen--Macaulay,  little Cohen--Macaulay $R$-module of finite projective dimension.
Evidently, we have to assume that $R$ has positive dimension, and then the question is actually affirmative:
Let $Q$ be a parameter ideal of $R$ and put $M=R\oplus R/Q$.
Then $\depth_R M=\ldim_R M=0$, $\dim_R M=\dim R>0$ and $\pd_R M<\infty$.
Hence it may be more meaningful to look for an indecomposable one.
However, we do not have such an example even in the case that $R$ is regular.
For example, if $R$ is a discrete valuation ring with uniformizer $x$, then every indecomposable $R$-module $M$ is isomorphic to either $R$ or $R/(x^n)$ for some $n>0$, and hence $M$ is Cohen--Macaulay. This lead us to the following modified question.

\begin{question}\label{indecomposable sCM}
Let $R$ be a regular local ring of dimension at least two.
Does there exist an indecomposable non-Cohen--Macaulay,  little Cohen--Macaulay $R$-module?
\end{question}

\begin{remark}\label{depth ldim}
Let $S\to R$ be a surjective homomorphism of (commutative noetherian) local rings.
Let $M$ be a (possibly infinitely generated) $R$-module.
Then $\depth_SM=\depth_RM$ and $\ldim_SM=\ldim_RM$.
Indeed, the first equality follows from the description of a depth by a Koszul complex.
The second one holds since $\dim_SSx=\dim_R Rx$ for any $x\in M$ and $\fn M=\fm M$, where $\fn$ is the maximal ideal of $S$.
These equalities would help us extend the above question to a homomorphic image of a regular local ring.
\end{remark}

We make observations that give some restrictions to construct a module as in Question \ref{indecomposable sCM}.
Recall that a finitely generated $R$-module $M$ is called {\em unmixed} if $\Ass_R M=\Assh_R M$, where $\Assh_R M$ stands for the set of prime ideals $\fp$ in $\Supp_RM$ such that $\dim R/\fp=\dim_R M$. 

\begin{proposition}\label{restrictions}
Let $M\ne0$ be an $R$-module.
If $M$ is cyclic or unmixed, then $\ldim_R M=\dim_R M$.
\end{proposition}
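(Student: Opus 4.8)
The plan is to prove the reverse inequality $\dim_R M \le \ldim_R M$; combined with Proposition \ref{depth-ldim-dim}---applicable since $M$ is finitely generated (being cyclic, or unmixed in the sense defined above) and nonzero, so that $M \ne \fm M$ by Nakayama---this gives the asserted equality. The point I would establish is that $\dim_R Rx = \dim_R M$ for \emph{every} $x \in M \setminus \fm M$; since each $\dim_R Rx \le \dim_R M$ anyway, taking the infimum over such $x$ then forces $\ldim_R M = \dim_R M$.

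For the cyclic case I would argue directly. Writing $M \cong R/I$, the hypothesis $M \ne 0$ forces $I \subsetneq R$, hence $I \subseteq \fm$ and $\fm M = \fm/I$. Thus any $x \in M \setminus \fm M$ is the class of a unit $u \in R \setminus \fm$, so $\bar u$ is a unit of the ring $R/I$ and $Rx = R/I = M$; in particular $\dim_R Rx = \dim_R M$.

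For the unmixed case, where $\Ass_R M = \Assh_R M$, I would fix $x \in M \setminus \fm M$ and note $x \ne 0$, so $Rx$ is a nonzero submodule of $M$ with $\emptyset \ne \Ass_R(Rx) \subseteq \Ass_R M = \Assh_R M$. Hence $\dim R/\fp = \dim_R M$ for every $\fp \in \Ass_R(Rx)$. Since the Krull dimension of a nonzero finitely generated module is the maximum of $\dim R/\fp$ over its associated primes (equivalently, over the minimal primes of its support), I conclude $\dim_R Rx = \dim_R M$.

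Both cases then yield $\dim_R Rx = \dim_R M$ for all $x \in M \setminus \fm M$, hence $\ldim_R M = \dim_R M$. I do not expect a genuine obstacle here: the cyclic case is immediate, and the only care needed is in the unmixed case, where one must correctly combine the inclusion $\Ass_R(Rx) \subseteq \Ass_R M$ with the description of the dimension of a finitely generated module through its associated primes.
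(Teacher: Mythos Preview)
Your proof is correct. The cyclic case matches the paper's argument exactly. For the unmixed case, the paper takes a different route: it argues by contradiction, choosing $x\in M\setminus\fm M$ with $\ldim_R M=\dim_R Rx=\dim R/\ann(x)$ and supposing $\ldim_R M<\dim_R M$; then $\ann(x)\not\subseteq\fp$ for every $\fp\in\Assh_R M=\Ass_R M$, so prime avoidance yields an $M$-regular element $y\in\ann(x)$, whence $yx=0$ forces $x=0$, a contradiction. Your direct argument via the inclusion $\Ass_R(Rx)\subseteq\Ass_R M$ and the description of dimension through associated primes is shorter and avoids the detour through regular elements; it also yields the slightly stronger statement that $\dim_R Rx=\dim_R M$ for \emph{every} nonzero $x\in M$ (not merely those outside $\fm M$), which is the standard consequence of unmixedness. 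The paper's approach, on the other hand, makes the obstruction more tangible: an $M$-regular element annihilating $x$.
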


\begin{proof}
First, we consider the case where $M$ is cyclic.
Then $M\cong R/I$ for some ideal $I$ of $R$, and we have
\begin{align*}
\ldim_R R/I
&=\inf\{ \dim_R R\overline{x}\mid\overline{x}\in (R/I) \setminus (\fm/I) \}\\
&= \inf\{ \dim_R R\overline{x}\mid \text{$x$ is a unit of $R$}\}
= \dim_R R\overline{1} = \dim_R R/I.
\end{align*}

Next, we consider the case where $M$ is unmixed.
Take an element $x\in M \setminus \fm M$ satisfying $\ldim_R M=\dim_R Rx=\dim R/\ann(x)$.
Suppose $\ldim_R M < \dim_R M$.
Then for all $ \fp \in \Assh_R M$, the ideal $\ann(x) $ is not contained in $\fp$.
Using the assumption $\Ass_RM=\Assh_RM$ and prime avoidance, we find an element $y\in\ann(x)$ which is $M$-regular.
Then $yx=0$, which implies $x=0$.
This contradicts the choice of $x$.
\end{proof}

Recall that $R$ is called {\it coprimary} if $R$ has a unique associated prime. A typical example of a coprimary ring is an integral domain.
Here is a direct consequence of the above proposition.

\begin{corollary}\label{coprimary}
Suppose that $R$ is coprimary.
Let $I$ be a nonzero ideal of $R$.
Regarding $I$ as an $R$-module, one has $\ldim_R I=\dim_R I=\dim R$.
\end{corollary}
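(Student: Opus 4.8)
The plan is to deduce this directly from Proposition~\ref{restrictions}, so the real content is to verify that the nonzero ideal $I$, viewed as an $R$-module, is \emph{unmixed} and that $\dim_R I=\dim R$. Note first that $I$ is finitely generated since $R$ is noetherian, and $I\ne\fm I$ by Nakayama's lemma, so Proposition~\ref{restrictions} will apply once unmixedness is established.

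First I would record what coprimariness gives: if $\fp$ denotes the unique associated prime of $R$, then $\Min R\subseteq\Ass R=\{\fp\}$ and $\Min R\ne\emptyset$, so $\fp$ is the unique minimal prime of $R$ and hence $\dim R=\dim R/\fp$. Next, since $I$ is a nonzero submodule of $R$, we have $\emptyset\ne\Ass_R I\subseteq\Ass_R R=\{\fp\}$, so $\Ass_R I=\{\fp\}$. Because the minimal elements of $\Supp_R I$ are precisely the minimal elements of $\Ass_R I$, it follows that $\fp$ is the unique minimal prime in $\Supp_R I$, and therefore $\dim_R I=\dim R/\fp=\dim R$.

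It then remains to check $\Assh_R I=\Ass_R I$. If $\fq\in\Assh_R I$, then $\dim R/\fq=\dim_R I$, which is the largest value of $\dim R/\fq'$ over $\fq'\in\Supp_R I$; hence $\fq$ cannot properly contain any prime in $\Supp_R I$, so $\fq$ is minimal in $\Supp_R I$ and thus $\fq=\fp$. This yields $\Assh_R I=\{\fp\}=\Ass_R I$, so $I$ is unmixed. Applying Proposition~\ref{restrictions} to the unmixed module $I$ then gives $\ldim_R I=\dim_R I=\dim R$, as desired.

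There is no serious obstacle here; the one thing to be careful about is keeping straight the standard relations among $\Ass$, $\Assh$, $\Min$ and $\Supp$ for the module $I$ (as opposed to the ring $R$), in particular the identification of $\Min\Supp_R I$ with $\Min\Ass_R I$ used twice above.
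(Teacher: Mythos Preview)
Your proof is correct and follows essentially the same approach as the paper: both arguments reduce to Proposition~\ref{restrictions} by showing $I$ is unmixed via the chain of inclusions $\emptyset\ne\Assh_R I\subseteq\Ass_R I\subseteq\Ass R=\{\fp\}$. The paper compresses this into a single line, whereas you spell out the intermediate facts (identifying $\Min\Supp_R I$ and verifying $\dim_R I=\dim R$) more explicitly, but the substance is identical.
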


\begin{proof}
Note that there are inclusions $\emptyset \neq \Assh_R I \subseteq \Ass_R I \subseteq \Ass R$.
Since $\Ass R$ consists only of one element, one has $\Assh_R I=\Ass_R I=\Ass R$.
The assertion now follows from Proposition \ref{restrictions}(2).
\end{proof}

\section{Several examples illustrating our results}

In this section, we make observations on our results obtained in the previous sections, by presenting various examples.
In the following two examples, we consider the inequality given in Theorem \ref{main theorem}.
As we see, it is sometimes an equality, and is sometimes a strict inequality.

\begin{example}\label{example for the main theorem0}
(1) Let $F=\K(x)$ be the Koszul complex of an element $x\in\fm$. 
Then one has $\dim R=\ldim_R\H_0(F)+\sup(k\otimes_RF)$ if and only if $x$ is a subsystem of parameters of $R$.
Indeed, it is clear that $\sup(k\otimes_RF)=1$, while $\ldim_R\H_0(F)=\dim R/(x)$ by Proposition \ref{restrictions}.
Moreover, $F$ satisfies the condition (a) of Theorem \ref{main theorem} when $x$ is a subsystem of parameters.

(2) Let $R$ be a Cohen--Macaulay local ring.
Let $M\ne0$ be an $R$-module which is either cyclic or unmixed.
Assume that $M$ has finite projective dimension, and let $F$ be a minimal free resolution of $M$.
Then, by the Auslander--Buchsbaum formula, $\dim R=\ldim_R\H_0(F)+\sup(k\otimes_RF)$ if and only if $M$ is Cohen--Macaulay.
In fact, $\sup(k\otimes_RF)=\pd_RM$ and $\ldim_R\H_0(F)=\dim_RM$ by Proposition \ref{restrictions}.
Moreover, $F$ satisfies the condition (b) of Theorem \ref{main theorem} because $\Supp_R\H_i(F)=\emptyset \subseteq \{\fm\}$ for $i>0$.
\end{example}

In the next example, we treat complexes of infinitely generated flat $R$-modules.

\begin{example}\label{example for the main theorem}
(1) Let $0\to \bigoplus_{\mathbb{N}}R\to \prod_{\mathbb{N}}R\to C\to 0$ be the natural exact sequence, where $\mathbb{N}$ denotes the set of positive integers.
Then $F=(0\to \bigoplus_{\mathbb{N}}R\to \prod_{\mathbb{N}}R\to 0)$ is a flat resolution of $C$, and $\Supp\H_1(F)=\emptyset\subseteq\{\fm\}$.
There are natural isomorphisms $k\otimes_R(\bigoplus_{\mathbb{N}}R)\cong\bigoplus_{\mathbb{N}}k$ and $k\otimes_R(\prod_{\mathbb{N}}R)\cong \prod_{\mathbb{N}}k$, where the latter holds since $k$ is finitely presented.
Hence $\sup(k\otimes_R F)=0$.
This also yields $C/\fm C\neq 0$, which implies $\ldim_R C\leq \dim R$.
On the other hand, Theorem \ref{main theorem} implies $\dim R\leq \ldim_R\H_0(F)+\sup(k\otimes_RF)=\ldim_R C$.
Therefore the equality $\dim R= \ldim_R\H_0(F)+\sup(k\otimes_RF)$ holds.
 
(2) Let $x\in \fm$ and $F=\prod_{n\in\mathbb{N}}\K(x^n)$.
Note that $F$ satisfies the condition (b) of Theorem \ref{main theorem} when $x$ is a non-zerodivisor of $R$.
We have $\H_0(F)=\prod_{n\in\mathbb{N}}R/(x^n)$, and $\sup(k\otimes_R F)=1$ as $k\otimes_RF=\prod_{n\in\mathbb{N}}\K(x^n,k)$.
Also, it is seen that $\ldim_R (\prod_{n\in\mathbb{N}}R/(x^n))=\inf\{\dim_R R/(x^n)\mid n\in\mathbb{N}\}=\dim R/(x)$.
Hence, as with Example \ref{example for the main theorem0}(1), one has $\dim R=\ldim_R\H_0(F)+\sup(k\otimes_RF)$ if and only if $x$ is a subsystem of parameters of $R$.
See also Example \ref{example 2}(3).
\end{example}

Next we consider the two inequalities given in Proposition \ref{depth-ldim-dim}.

\begin{example}\label{example 1}
(1) Let $N$ be a finitely generated $R$-module with $\dim_RN>0$, and set $M=k\oplus N$.
One then has $\depth_R M=\ldim_R M=0<\dim_R M$. 

(2) Let $R=k\jump{x,y}/(x^2,xy)$ with $k$ a field.
Let $\fm=(x,y)$ be the maximal ideal of $R$.
Then we have $\depth_R \fm=\ldim_R \fm=0<1=\dim_R \fm$, and $\depth R=0<1=\ldim R=\dim R$.
In fact, note that $\fm=(x)\oplus(y)$ and $(x)\cong k$.
Hence $\ldim_R\fm=0$ by (1).
Proposition \ref{restrictions} implies $\ldim R=\dim R$.

(3) Suppose $0<\depth R<\dim R$.
Take an $R$-regular element $x \in \fm$, and set $M=R/(x)\oplus R$.
Then $\depth_R M=\depth R/(x)=\depth R-1$ and $\dim_R M=\dim R$.
Also, $\ldim_R M=\ldim_R R/(x)=\dim R/(x)=\dim R-1$, where the first equality is seen by the definition of the little dimension, while the second equality follows from Proposition \ref{restrictions}.
We conclude that $\depth_R M<\ldim_R M<\dim_R M$.
\end{example}

The third assertion of the above example gives the strict inequalities, assuming that $R$ has positive depth.
The following proposition gives the same inequalities in the case where $R$ has depth zero.

\begin{proposition}\label{depth<ldim<dim}
Assume $\depth R=0$.
Let $x$ be a minimal generator of $\fm$ such that $\fp=(x)$ is a prime ideal of $R$, $\ann(x)$ is not $\fm$-primary, $\Assh R=\{\fp\}$ and $R_\fp$ is a field.
Then, regarding $\fm$ as an $R$-module, we have the strict inequalities $\depth_R \fm<\ldim_R\fm <\dim_R \fm$.
\end{proposition}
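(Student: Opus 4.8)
The plan is to compute, for the $R$-module $\fm$, each of $\depth_R\fm$, $\ldim_R\fm$ and $\dim_R\fm$. The starting observation is that $\dim R\ge\dim R/\ann(x)\ge 1$, since $\ann(x)\subseteq\fm$ is not $\fm$-primary; in particular $\fm\neq\fm^2$ by Nakayama, so Proposition~\ref{depth-ldim-dim} applies to $M=\fm$ and gives $\depth_R\fm\le\ldim_R\fm\le\dim_R\fm$, and $\ldim_R\fm$ is a genuine infimum over the minimal generators of $\fm$. It therefore suffices to prove $\depth_R\fm=0$, $\dim_R\fm=\dim R$, and $1\le\ldim_R\fm\le\dim R-1$, which together give $\depth_R\fm=0<1\le\ldim_R\fm\le\dim R-1<\dim R=\dim_R\fm$.

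Next I would handle the three easier identities. For the depth, $(0:_\fm\fm)=\fm\cap(0:_R\fm)=(0:_R\fm)$, since the socle $(0:_R\fm)$ is a proper ideal of $R$ and hence lies in $\fm$; this is nonzero because $\depth R=0$, so $\depth_R\fm=0$. For the dimension, note $\fp\neq\fm$ (otherwise $\dim R=\dim R/\fm=0$), so any $m\in\fm\setminus\fp$ becomes a unit in the field $R_\fp$, forcing $\fm R_\fp=R_\fp\neq0$; thus $\fp\in\Supp_R\fm$ and $\dim_R\fm\ge\dim R/\fp=\dim R$ using $\fp\in\Assh R$, while the reverse inequality is obvious. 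For the upper bound on $\ldim$, test the minimal generator $z=x$: then $Rx=\fp\cong R/\ann(x)$, so $\ldim_R\fm\le\dim R/\ann(x)$; since $R_\fp$ is a field, $\fp R_\fp=0$ produces $s\notin\fp$ with $sx=0$, hence $\ann(x)\not\subseteq\fp$ and $\fp\notin\Supp_R Rx$, and as $\Assh R=\{\fp\}$ makes $\fp$ the unique prime of coheight $\dim R$, every prime containing $\ann(x)$ has coheight at most $\dim R-1$, so $\ldim_R\fm\le\dim R-1$.

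The main point, and the step I expect to require the most care, is the lower bound $\ldim_R\fm\ge 1$: no minimal generator $z$ of $\fm$ may have $Rz$ of finite length. I would split into two cases according to whether $z\in\fp=(x)$. If $z\notin\fp$, then the image $\bar z$ in the domain $R/\fp$ is nonzero, so multiplication by $\bar z$ embeds $R/\fp$ into itself; its image $R\bar z$ is a quotient of $Rz$, whence $\dim_R Rz\ge\dim R/\fp=\dim R\ge 1$. If $z\in\fp$, write $z=rx$; since $z\notin\fm^2$ the coefficient $r$ cannot lie in $\fm$, so $r$ is a unit and $Rz=Rx=\fp\cong R/\ann(x)$, giving $\dim_R Rz\ge 1$ because $\ann(x)$ is not $\fm$-primary. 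In both cases $\dim_R Rz\ge 1$, so $\ldim_R\fm\ge1$, and assembling the bounds completes the proof. The one thing to watch is that each distinguished property of $x$ --- primality of $(x)$, $\fp\in\Assh R$, $R_\fp$ being a field, and $\ann(x)$ not $\fm$-primary --- gets used exactly where needed; beyond that the argument is a direct unwinding of the definition of $\ldim$.
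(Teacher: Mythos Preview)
Your proof is correct and follows essentially the same approach as the paper: both compute $\depth_R\fm=0$ via the socle, bound $\ldim_R\fm$ above by testing the generator $x$ and invoking $\Assh R=\{\fp\}$ to see that $\fp\notin\Supp_R Rx$, and bound $\ldim_R\fm$ below by observing that any minimal generator lying in $\fp=(x)$ must equal $x$ up to a unit. The only cosmetic differences are that the paper phrases the lower bound as a proof by contradiction and reaches $\ann(x)\not\subseteq\fp$ via a primary decomposition $0=\fp\cap I$, whereas you argue directly by a case split and produce an explicit $s\notin\fp$ with $sx=0$.
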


\begin{proof}
As $\depth R=0$ and $\fm\ne0$, we have $0\ne\soc R\subseteq \fm$. 
Hence $\soc_R\fm \ne 0$, and $\depth_R\fm =0$.
As $R_\fp$ is a field, we have $\fp R_\fp=0=0R_\fp$, and $\fp=\fp R_\fp\cap R=0R_\fp\cap R$ is the $\fp$-primary component of the zero ideal $0$ of $R$.
Write $0=\fp\cap I$.
Then $I\subseteq \ann(x)$, and we have
$$
\ldim_R \fm\leq \dim_R Rx=\dim R/\ann(x)\leq \dim R/I <\dim R=\dim_R \fm,
$$
where the strict inequality follows from the assumption that $\Assh R=\{\fp\}$.

It follows from the above argument that $\dim R>0$.
If $\fp=\fm$, then $R=R_\fp$ is a field and $\fm=0$, which is a contradiction.
Hence $\fp\neq \fm$.
Suppose that there is a minimal generator $y$ of $\fm$ with $\dim_RRy=0$.
Then $\fm^ny=0$ for some $n>0$, and $\fm^ny$ is contained in the prime ideal $\fp$.
As $\fp\ne \fm$, we have $y\in \fp=(x)$ and get $y=xz$ for some $z\in R$.
Since $y\notin\fm^2$, the element $z$ must be a unit of $R$.
Thus $\fm^nx=0$ and $\ann(x)$ is $\fm$-primary, this is a contradiction.
We conclude that $\ldim_R \fm>0=\depth_R\fm$.
\end{proof}

\begin{example}\label{example 2}
Let $k$ be a field.

(1) Let $R=k\jump{x,y,z}/(x^2y,xy^2,xz)$.
Then $\ann(x)=(xy,y^2,z)$.
The zero ideal of $R$ has an irredundant primary decomposition $0=(x)\cap(y,z)\cap(x^2,y^2,z)$, which shows $\Assh R=\{(x)\}$.
Proposition \ref{depth<ldim<dim} yields $\depth_{R}\fm<\ldim_{R}\fm<\dim_{R}\fm$.
To be more precise, $\depth_{R}\fm=0$, $\ldim_{R}\fm=1$ and $\dim_{R}\fm=2$.

(2) There is also an example of an equidimensional local ring.
Let $R=k\jump{x,y,z}/(x^2,xy^2,xyz)$.
Then $\ann(x)=(x,y^2,yz)$.
We have $0=(x)\cap(y)\cap(y^2,z)$, and $\Min R=\Assh R=\{(x)\}$.
Proposition \ref{depth<ldim<dim} implies $\depth_{R}\fm<\ldim_{R}\fm<\dim_{R}\fm$.
In fact, we have $\depth_{R}\fm=0$, $\ldim_{R}\fm=1$ and $\dim_{R}\fm=2$.

(3) Let us present an example of an infinitely generated module.
Take an element $x\in\fm$, and set $M=\prod_{n\in\mathbb{N}}R/(x^n)$.
Then there is an inclusion $\widehat R=\varprojlim_{n\in \mathbb{N}} R/(x^n)\hookrightarrow M$, where $\widehat R$ denotes the $(x)$-adic completion of $R$.
The inclusions $R\hookrightarrow \widehat R\hookrightarrow M$ yield $\dim R=\dim_R\widehat R=\dim_RM$.
Now suppose that $x$ is a non-zerodivisor. Then, there is an isomorphism $\RHom_R(k, M)\cong \prod_{n\in\mathbb{N}}\RHom_R(k, R/(x^n))$, from which we obtain $\depth_R M=\depth R -1$.
We see from Example \ref{example for the main theorem}(2) that $\ldim_RM=\dim R/(x)=\dim R-1$.
Thus, under the assumption that $R$ is not Cohen--Macaulay, we have $\depth_RM< \ldim_RM< \dim_RM$. 
\end{example}

Let $M$ be a nonzero finitely generated $R$-module of finite projective dimension.
Combining the first inequality in Corollary \ref{33} with Remark \ref{r2}(1), we have $\cmd R \le \lcmd_R M \le \cmd_R M$.
We give examples where either/both of these inequalities become strict. 

\begin{example}\label{example cmd}
(1) Let $R$ and $M$ be as in Example \ref{example 1}(3).
Then $M$ has projective dimension one (hence finite), and it holds that $\cmd R=\lcmd_R M<\cmd_R M=\cmd R+1$.

(2) Suppose that $R$ is regular and $\dim R\geq 2$.
Then Corollary \ref{coprimary} implies $\ldim_R \fm=\dim_R\fm=\dim R$.
As $\depth_R \fm =1$, we have $\cmd R=0 <\lcmd_R\fm=\cmd_R\fm=\dim R-1$.
(Here, the regularity of $R$ is needed just to have that $\pd_R\fm$ is finite.
More precisely, we have $\cmd R <\lcmd_R\fm=\cmd_R\fm=\dim R-1$ for any coprimary local ring $R$ with $\depth R\geq 2$.)

(3) Let $S=k\jump{x,y,z}$ with $k$ a field, and set $R=S/(x^2y,xy^2,xz)$.
Let $\fm=(x,y,z)R$ be the maximal ideal of $R$. 
By Example \ref{example 2}(1), we have $\depth_R \fm=0$, $\ldim_R \fm=1$ and $\dim_R \fm=2$.
Now we regard $\fm$ as an $S$-module.
Then $\depth_S \fm=0$, $\ldim_S \fm=1$ and $\dim_S \fm=2$ by Remark \ref{depth ldim}.
It holds that $\cmd_S S=0$, $\lcmd_S \fm=1$ and $\cmd_S \fm=2$.
Thus $\cmd_S S<\lcmd_S \fm <\cmd_S \fm$ and $\pd_S \fm=3 <\infty$.
On the other hand, we have $\cmd_R R=2>1=\lcmd_R\fm$ and $\pd_R \fm=\infty$.
This shows that the assumption that $M$ has finite projective dimension is necessary for the first inequality in Corollary \ref{33} to hold true.
\end{example}

\begin{ac}
The basic ideas of this paper were conceived by Hans-Bj\o rn Foxby, more than five years after Siamak Yassemi finished his formal apprenticeship with Foxby \cite{FY}.
The authors would like to dedicate this paper to their friend Hans-Bj\o rn Foxby for sharing his useful ideas with them. Part of this work was completed while Siamak Yassemi was visiting the Max Planck Institute in Bonn-Germany.
He wishes to express his gratitude to the Institute for its warm hospitality and for providing a stimulating research environment. In addition, Tsutomu Nakamura acknowledges support from the Program Ricerca di Base 2015 of the University of Verona. Finally, the authors thank the referee for reading the paper carefully and giving them valuable comments.
\end{ac}



\end{document}